\newtheorem{theorem}{Theorem}
\newtheorem{remark}{Remark}
\newtheorem{lemma}{Lemma}
\newtheorem{exam}{Example}
\newcommand{\E}{\mathsf{E}}
\newcommand{\Prob}{\mathsf{P}}
\begin{document}

\begin{center}
{\Large\bf On the governing equations for Poisson and Skellam
processes time-changed by inverse subordinators}
\end{center}

\begin{center}{{K. V. Buchak}\footnote{Department of Probability Theory, Statistics and Actuarial Mathematics,
        Taras Shevchenko National University of Kyiv,
        Vo\-lo\-dy\-myr\-ska 64, 01601, Kyiv, Ukraine,
        E-mail:{ kristina.kobilich@gmail.com}}
 and {L. M. Sakhno}\footnote{Department of Probability Theory, Statistics and Actuarial Mathematics,
        Taras Shevchenko National University of Kyiv,
        Vo\-lo\-dy\-myr\-ska 64, 01601, Kyiv,
        Ukraine, E-mail:{ lms@univ.kiev.ua}}}

\end{center}


\begin{abstract}
\noindent    In the paper we present the governing equations for
marginal distributions of Poisson and Skellam processes
time-changed by inverse subordinators. The equations are given in
terms of convolution-type derivatives.
\\ \\ \textit{Keywords}:{ Poisson process, Skellam process, time-change, inverse
subordinator, governing equation, convolution-type derivatives}
\end{abstract}


\section{Introduction}
Time-changed Poisson processes $N(H^f(t))$, $t\ge0$, where
$H^f(t)$ is a subordinator with the Laplace exponent $f$,
independent of $N(t)$, provide a rich class of tractable and
flexible models with applications in various applied areas. The
processes $N(H^f(t))$, $t\ge0$, have positive integer-valued jumps
whose distribution can be expressed in terms of the corresponding
Bern\v{s}tein function $f$. In the papers \cite{OT}, \cite{GOS}
the distributional properties, hitting times and governing
equations for such processes were derived and specified for
several choices of $H^f(t)$ (some particular cases can be also
found  in \cite{OP}, \cite{BS}, \cite{KS}, \cite{KNV}).

Particular attention in literature has been gained by the
processes $N(S_\alpha(t))$, where $S_\alpha(t)$ is a stable
subordinator, with the Bern\v{s}tein function $f(s)=s^\alpha$,
$0<\alpha<1$. In this case the time-changed Poisson process is
called  a space-fractional Poisson process. The study of these
processes was undertaken, e.g., in the papers \cite{OP},
\cite{OT}, \cite{GOS}, to mention only few.

Another interesting time-changed model is provided by the process
$N(Y_\alpha(t))$, where $Y_\alpha(t)$ is the inverse process for
the stable subordinator $S_\alpha(t)$. Such a process is called a
time-fractional Poisson process. The important feature of this
process is that the probabilities $p_k^\alpha(t)=\Prob
\left\{N\left(Y_\alpha(t)\right)=k\right\}$ obey the fractional
equation of the form:
\begin{equation}\label{intr1}
\mathcal{D}_t^{\alpha}p_k^{\alpha}(t)=-\lambda\left(p_k^{\alpha}(t)-p_{k-1}^{\alpha}(t)\right),\quad k=0,1,2,\dots,
\end{equation}
with appropriate initial conditions, where $\mathcal{D}_t^{\alpha}$ is the fractional Caputo-Djrbashian derivative, $\lambda$ is the intensity parameter of the Poisson process $N(t)$ (see, for example, Beghin and Orsingher \cite{BO09}, \cite{BO10}).

We have supposed so far that the Poisson process $N(t)$ is homogeneous, and, correspondingly, $N(Y_\alpha(t))$ is a fractional (in time) homogeneous Poisson process. In the paper by Leonenko, Scalas, Trinh \cite{Leon} the non-homogeneous fractional Poisson process was introduced and studied, namely, the time-changed process $N(Y_\alpha(t))$, where $N(t)$ is supposed to be non-homogeneous. It was shown that the  marginal  distributions of such process satisfy the fractional difference-integral equations, involving the fractional Caputo-Djrbashian derivative, these governing equations generalize the equations \eqref{intr1} which hold in the homogeneous case.

In the recent paper by Kochubei \cite{K} and later in the paper by
Toaldo \cite{T}, the new types of differential operators are
presented which are related to Bern\v{s}tein functions and
generalize the classical Caputo-Djrbashian and Riemann-Liouville
fractional derivatives. In the paper \cite{K} these operators are
called differential-convolution operators, and in \cite{T} they
are called convolution-type derivatives with respect to
Bern\v{s}tein functions. It is shown in \cite{T} that these
derivatives provide the unifying framework for the study of
subordinators and their inverse processes, and, in particular, the
governing equations  for densities of subordinators and their
inverses are obtained in terms of the convolution-type
derivatives. The introduction of these derivatives has also
inspired numerous recent studies of new types of equations
suitable to describe anomalous diffusion and other complex
processes.

In the present paper we study the time-changed processes
\begin{equation}N(Y^f(t)), \,\,t\ge 0, \,\, \mbox{\rm and} \,\, S(Y^f(t)), \,\,t\ge 0,\label{intr2}
\end{equation}
where $N(t)$ is a (non-homogeneous) Poisson process, $S(t)$ is a
Skellam process, and $Y^f(t)$ is an inverse subordinator. Note
that with upper index $f$ we refer to the Bern\v{s}tein function
of the subordinator to which $Y^f(t)$ is the inverse.

We obtain the governing equations for marginal distributions of
the processes \eqref{intr2} which generalize the known results,
where the time change is performed by means of the inverse stable
subordinator (see, \cite{Leon}, \cite{KLS}). This generalization
is done by the use of appropriate convolution-type derivatives  in
lieu of fractional ones.

We note that governing equations for marginal distributions of the
process $N(H^f(t))$, $t\ge0$, with $H^f(t)$ being an arbitrary
subordinator are presented in \cite{OT}. Namely, these are
difference-differential equations (form of which depends on the
corresponding Bern\v{s}tein function $f$),  and these equations
can be specified for particular models (see, e.g., \cite{GOS},
\cite{OT}, \cite{BS}, \cite{KNV}). For some models more general
fractional difference-differential equations can be written. We
refer, for example, to \cite{OP}, and to the very recent paper
\cite{ALM}, where new properties of fractional Poisson processes,
mixed-fractional Poisson processes and  fractional Poisson fields
are presented and the corresponding fractional differential
equations are studied.

The paper is organized as follows. In Section 2 we give all
necessary definitions and results from \cite{T} on convolution
type-derivatives which will be used in the next sections. In
Section 3 we present the governing equations for the marginal
distribution and moment generating function for the Poisson
process time-changed by a general inverse subordinator $Y^f(t)$.
The equations involve the Caputo-Djrbashian convolution-type
derivatives. As a direct corollary of the theorem on governing
equations for time-changed Poisson process, we obtain that the
Laplace transform of the inverse subordinator $Y^f(t)$ is an
eigenfunction of the generalized Caputo-Djrbashian
convolution-type derivative w.r.t. the corresponding Bern\v{s}tein
function $f$. In Section 4 the analogous results are derived for
time-changed Skellam processes.

\section{Preliminaries}

Let $f(x)$ by a Bern\v{s}tein function:
\begin{equation}\label{fx}
f(x)=a+bx+\int_{0}^{\infty}\left(1-e^{-xs}\right)\overline{\nu}(ds), \qquad x>0, a,b\geq0,
\end{equation}
$\overline{\nu}(ds)$ is a non-negative measure on $(0,\infty)$, referred to as the L\'evy measure for $f(x)$, such that
\begin{equation*}
\int_{0}^{\infty}\left(s\wedge 1\right)\overline{\nu}(ds)<\infty.
\end{equation*}
In the papers \cite{K}, \cite{T} the convolution-type derivatives with respect to Bern\u{s}tein functions were introduced, which generalize the classical fractional derivatives.

We present the definitions and some facts following  \cite{T}.

To write the governing equations for time-changed processes, we will use  the generalized Caputo-Djrbashian (C-D) derivative w.r.t. the Bern\v{s}tein function $f$, which is defined on the space of absolutely continuous functions as follows (see \cite{T}, Definition 2.4):
\begin{equation}\label{fDt}
^f{\mathcal D}_t u(t)=b \frac{d}{dt}u(t)+\int_{0}^{t}\frac{\partial }{\partial t }u(t-s)\nu (s)ds,
\end{equation}
where $\nu (s)=a+\overline{\nu}(s,\infty)$ is the tail of the L\'evy measure $\overline{\nu}(s)$ of the function $f$.

In the case when $f(x)=x^{\alpha},x>0, \alpha\in (0,1)$, the derivative \eqref{fDt} becomes:
\begin{equation*}
^f\mathcal{D}_t u(t)=\mathcal{D}_t^{\alpha}u(t),
\end{equation*}
where $\mathcal{D}_t^{\alpha}u(t)$ is the fractional C-D
derivative:
\begin{equation*}
\mathcal{D}_t^{\alpha}u(t)=\frac{d^{\alpha}}{dt^{\alpha}}u(t)=\frac{1}{\Gamma(1-\alpha)}\int_{0}^{t}\frac{u^\prime(s)}{\left(t-s\right)^{\alpha}}ds
\end{equation*}
(see Remarks 2.6 and 2.5 in \cite{T}).

According to Lemma 2.5 \cite{T}, the following relation holds for the Laplace transform of the derivative \eqref{fDt}:
\begin{equation}\label{LfDt}
\mathcal{L}\left[^f\mathcal{D}_t u(t)\right](s)=f(s)\mathcal{L}\left[u(t)\right](s)-\frac{f(s)}{s}u(0), s>s_0,
\end{equation}
for $u(t)$ such that $|u(t)|\leq\mathsf{M} e^{s_0t}$, $M$ and $s_0$ are some constants.

The generalization of the classical Riemann-Liouville (R-L) fractional derivative is introduced in \cite{T} by means of another convolution-type derivative w.r.t. $f$, which is given by the following formula:
\begin{equation}\label{fDDt}
^f\mathbb{D}_t u(t)=b \frac{d}{dt}u(t)+\frac{d}{dt}\int_{0}^{t}u(t-s)\nu (s)ds
\end{equation}
(see, \cite{T}, Definition 2.1).

The derivatives $^f\mathcal{D}_t$ and $^f\mathbb{D}_t$ are related as follows:
\begin{equation}\label{fDtrelation}
^f\mathbb{D}_t u(t)=^f\mathcal{D}_t+\nu (t)u(0)
\end{equation}
(see, \cite{T}, Proposition 2.7).

Let $H^f(t), t\geq0,$ be a subordinator with the Laplace exponent $f$ given by \eqref{fx}, and $Y^f(t)$ be its inverse process defined as
\begin{equation}\label{Yf}
Y^f(t)=inf \left\{s\geq0:H^f(s)>t\right\}.
\end{equation}
It was shown in \cite{T} that the distribution of the inverse subordinator $Y^f(t)$ has a density $l^f(t,s)$ and its Laplace transform with respect to $t$ has the form:
\begin{equation*}
\mathcal{L}_t\left(l^f(t,s)\right)(r)=\frac{f(r)}{r}e^{-sf(r)},
\end{equation*}
provided that the following condition holds:

\medskip
\noindent   {\bf Condition  I.}
$\overline{\nu}(0,\infty)=\infty$ and the tail $\nu(s)=a+\overline{\nu}(s,\infty)$ is absolutely continuous.
\medskip

It was shown in \cite{T} that the convolution-type derivatives $^f\mathcal{D}_t$ and $^f\mathbb{D}_t$ are useful tools which allow to study  the properties of subordinators and their inverses in the unifying manner. In particular, the governing equations for their densities can be written down in terms of convolution-type derivatives. According to Theorem 4.1 \cite{T},
the density $l^f(t,u)$ of the inverse subordinator $Y^f(t)$ satisfies the following equation:
\begin{equation}\label{dlf}
^f{\mathbb{D}}_t l^f(t,u)=-\frac{\partial}{\partial  u} l^f(t,u),
\end{equation}
subject to
\begin{equation}\label{dlfin}
l^f(t, u/b)=0,\,\,\,\,l^f(t,0)=\nu(t), \,\,\,\, l^f(0,u)=\delta(u).
\end{equation}

In the paper \cite{K} the solution to the Cauchy problem
 for the equations involving differential-convolution operators were presented,
 and the connections with Poisson processes time-changed by inverse subordinators were provided.
 We will return in more details to these results in Remark 3 in Section 3.

In the present paper we will use the generalized C-D and R-L convolution-type derivatives to obtain the governing equations for time-changed Poisson and Skellam processes.

Consider a non-homogeneous Poisson process $N(t), t\geq0$, with intensity function $\lambda(t):[0,\infty)\to[0,\infty)$. Denote
\begin{equation*}
\Lambda(s,t)=\int_{s}^{t}\lambda(u)du, \qquad\Lambda(t)=\Lambda(0,t).
\end{equation*}
$N(t)$ has independent but not necessarily stationary increments and for $0\leq v<t$
\begin{eqnarray*}
    p_x(t,v)&=&\Prob\left\{N(t+v)-N(v)=x\right\}=\frac{e^{-\left(\Lambda(t+v)-\Lambda(v)\right)}\left(\Lambda(t+v)-\Lambda(v)\right)^x}{x!}\\
    &=&\frac{e^{-\left(\Lambda(v,t+v)\right)}\left(\Lambda(v,t+v)\right)^x}{x!},\qquad x=0,1,2,\dots.
\end{eqnarray*}
The distribution $p_x(t,v)$ satisfy the difference-differential equation
\begin{equation*}
\frac{d}{dt}p_x(t,v)=-\lambda(t+v)\left(p_x(t,v)-p_{x-1}(t,v\right),    \qquad  x=0,1,2,\dots.
\end{equation*}
with the usual initial conditions
\begin{equation*}
p_x(0,v)=
\begin{cases}
1,\quad x=0,\\
0,\quad x\geq1,\\
\end{cases}
\end{equation*}
$p_{-1}(t,v)=0$ (see, e.g. \cite{Leon}). We denote $p_x(t)=p_x(t,0)$.

The non-homogeneous Poisson process $N(t)$ can be represented as $N(t)
=N_1(\Lambda(t))$, where $N_1(t)$ is the homogeneous Poisson process with intensity $1$.

We will study the non-homogeneous Poisson process time-changed by an inverse subordinator $Y^f(t)$:
\begin{equation}\label{**}
N^f(t)=N(Y^f(t)), t\geq0,
\end{equation}
$f$ is the Bern\u{s}tein function of the form \eqref{fx} and we will suppose in what follows that $a=b=0$.

In the paper \cite{Leon} the authors study the case
$f(s)=s^{\alpha}$, which corresponds to the stable subordinator
and $Y^f(t)$ is the inverse stable subordinator $Y_{\alpha}(t)$.
The corresponding time-changed process \eqref{**}  is called a
fractional non-homogeneous Poisson process, for marginal
distributions of which the fractional difference-differential
equations are derived in \cite{Leon}.

In the next section, we are aimed to obtain the governing equation for marginal distributions of \eqref{**} in the general case. We show that this can be done with the use of convolution-type derivative \eqref{fDt}, applying the same line of reasonings as those used in \cite{Leon} for a particular case of the inverse stable subordinator.

\section{Poisson processes time-changed by inverse subordinators}

Consider a non-homogeneous Poisson process $N(t), t\geq0$, with intensity function $\lambda(t)$, and its increments defined for $v\geq0$ as $$I(t,v)=N(t+v)-N(v), t\geq0.$$

Taking time-change by the inverse subordinator $Y^f(t)$, we consider the process
$$N^f(t)=N\left(Y^f(t)\right), \,\,t\geq0,$$
and, for $v\geq0$, the time-changed increments process $$ I^f(t,v)=I\left(Y^f(t),v\right)=N\left(Y^f(t)+v\right)-N(v), \,\, t\geq0.$$
Their marginal distributions can be written down as follows:
\begin{eqnarray*}
    p_x^f(t,v)&=&\Prob\left\{I^f(t,v)=x\right\}=\Prob\left\{N\left(Y^f(t)+v\right)-N(v)=x\right\}
    =\int_{0}^{\infty}p_x(u,v)l^f(t,u)du\\
    &=&\int_{0}^{\infty}\frac{e^{-\Lambda(v,u+v)}\Lambda(v,u+v)^x}{x!}l^f(t,u)du, \qquad x=0,1,2,\dots
\end{eqnarray*}
and
\begin{eqnarray*}
    p_x^f(t)&=&p_x^f(t,0)=\Prob\left\{N\left(Y^f(t)\right)=x\right\}
    =\int_{0}^{\infty}p_x(u)l^f(t,u)du\\
    &=&\int_{0}^{\infty}\frac{e^{-\Lambda(u)}\Lambda(u)^x}{x!}l^f(t,u)du, \qquad x=0,1,2,\dots,
\end{eqnarray*}
where $l^f(t,u)$ is the density of the process $Y^f(t)$, which exists under Condition {\bf I}.

In the next theorem we present the governing equation for the marginal distributions $p_x^f(t,v)$.
\begin{theorem}
    Let Condition I  hold. Then the marginal distributions  $p_x^f(t,v)$ of the process $I^f(t,v)$ satisfy the following differential-integral equations
    \begin{equation*}
    ^f\mathcal{D}_tp_x^f(t,v)=\int_{0}^{\infty}\lambda(u+v)\left[-p_x(u,v)+p_{x-1}(u,v)\right]l^f(t,u)du,
    \end{equation*}
    with initial condition

    $\qquad\qquad\qquad
    p_x(0,v)=
    \begin{cases}
    1,\quad x=0,\\
    0,\quad x\geq1,\\
    \end{cases}
    $

    \noindent and $p_{-1}(0,v)\equiv0$, where $^f\mathcal{D}_t$ is the C-D convolution-type derivative with respect to function $f$ defined by \eqref{fDt}.
\end{theorem}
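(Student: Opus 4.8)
The plan is to start from the explicit integral representation
\[
p_x^f(t,v)=\int_{0}^{\infty}p_x(u,v)\,l^f(t,u)\,du,
\]
apply the C-D convolution-type derivative $^f\mathcal{D}_t$ in the variable $t$, and push it under the integral sign so that it acts only on the kernel $l^f(t,u)$. Since $^f\mathcal{D}_t$ is a convolution operator in $t$ (see \eqref{fDt}) and $p_x(u,v)$ does not depend on $t$, we expect
\[
{}^f\mathcal{D}_t\,p_x^f(t,v)=\int_{0}^{\infty}p_x(u,v)\,{}^f\mathcal{D}_t\,l^f(t,u)\,du .
\]
The governing equation \eqref{dlf} for the density of the inverse subordinator is stated in terms of the R-L-type derivative $^f\mathbb{D}_t$, so the first real step is to convert it into a statement about $^f\mathcal{D}_t$ using the relation \eqref{fDtrelation}, namely $^f\mathbb{D}_t u={}^f\mathcal{D}_t u+\nu(t)u(0)$. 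Applied to $l^f(t,u)$ with initial value $l^f(0,u)=\delta(u)$ from \eqref{dlfin}, this gives
\[
{}^f\mathcal{D}_t\,l^f(t,u)=-\frac{\partial}{\partial u}l^f(t,u)-\nu(t)\delta(u),
\]
since under the standing assumption $a=b=0$ the operator $^f\mathbb{D}_t$ equals $-\partial_u$ on $l^f$.

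Substituting this back, the right-hand side splits into two pieces. The $\delta(u)$ term integrates against $p_x(u,v)$ and simply evaluates $p_x(\cdot,v)$ at $u=0$, contributing $-\nu(t)p_x(0,v)$, which is nonzero only for $x=0$ and equals the boundary value at the origin. The main term is
\[
-\int_{0}^{\infty}p_x(u,v)\,\frac{\partial}{\partial u}l^f(t,u)\,du,
\]
which I would handle by integration by parts in $u$. The boundary term at $u=0$ produces $p_x(0,v)l^f(t,0)=\nu(t)p_x(0,v)$ by the boundary condition $l^f(t,0)=\nu(t)$ in \eqref{dlfin}; this cancels the delta contribution exactly. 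The boundary term at $u=\infty$ vanishes because $l^f(t,u)\to0$. What survives is
\[
\int_{0}^{\infty}\frac{\partial}{\partial u}p_x(u,v)\,l^f(t,u)\,du .
\]

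To finish, I would invoke the difference-differential equation satisfied by the inner Poisson probabilities, $\frac{\partial}{\partial u}p_x(u,v)=-\lambda(u+v)\bigl[p_x(u,v)-p_{x-1}(u,v)\bigr]$ (the forward Kolmogorov equation quoted in the Preliminaries for the non-homogeneous process), and substitute it into the integral. This immediately yields the claimed form
\[
{}^f\mathcal{D}_t\,p_x^f(t,v)=\int_{0}^{\infty}\lambda(u+v)\bigl[-p_x(u,v)+p_{x-1}(u,v)\bigr]l^f(t,u)\,du .
\]
The initial condition follows from $l^f(0,u)=\delta(u)$, which forces $p_x^f(0,v)=p_x(0,v)$. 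I expect the main obstacle to be rigorous justification of the two formal manipulations: interchanging $^f\mathcal{D}_t$ with the $u$-integral, and the integration by parts against the distributional identity for $^f\mathcal{D}_t l^f$ that involves $\delta(u)$ and the boundary value $\nu(t)$. A cleaner and more rigorous alternative, which I would likely adopt, is to verify the identity at the level of Laplace transforms in $t$: applying $\mathcal{L}_t$ and using \eqref{LfDt} together with $\mathcal{L}_t(l^f(t,u))(r)=\frac{f(r)}{r}e^{-uf(r)}$ reduces both sides to elementary expressions in $r$, so that the equation can be checked by direct computation and then inverted, thereby sidestepping the delicate interchange-of-limits and distributional boundary arguments.
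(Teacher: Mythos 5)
Your proposal is correct, but it takes a genuinely different route from the paper's own proof of this theorem. The paper (following \cite{Leon}) works entirely on the transform side: it takes the characteristic function in $x$ and the Laplace transform in $t$, plugs in the explicit kernel $\tilde{l}^f(r,u)=\frac{f(r)}{r}e^{-uf(r)}$, integrates by parts in $u$ against $e^{-uf(r)}$, recognizes the resulting left-hand side as $\mathcal{L}_t\bigl\{{}^f\mathcal{D}_t\hat{p}_y^f\bigr\}$ via \eqref{LfDt}, and then inverts both transforms. Your primary argument instead stays in the time domain: you combine the governing equation \eqref{dlf} for $l^f(t,u)$ with the relation \eqref{fDtrelation} and the initial/boundary data \eqref{dlfin}, integrate by parts in $u$, observe the exact cancellation between the $\nu(t)\delta(u)$ term and the $u=0$ boundary term, and finish with the Kolmogorov equation for $p_x(u,v)$. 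This is precisely the technique the paper reserves for its \emph{alternative} proof of Theorem 3 and for the Skellam case (Theorem 5); your contribution is to note that nothing in that argument requires homogeneity, so it extends verbatim to the increment process $I^f(t,v)$. The trade-off is clear: your route avoids any transform inversion and exposes the cancellation structure, but rests on distributional and interchange-of-integration manipulations (which you rightly flag, and which the paper itself performs without justification in Theorems 3 and 5); the paper's transform proof replaces those delicate steps by elementary computations with explicit kernels, at the cost of a double inversion at the end. Finally, the ``cleaner alternative'' you sketch in your last sentence — applying $\mathcal{L}_t$, using \eqref{LfDt} and the explicit Laplace transform of $l^f$, and checking the identity directly — is essentially the paper's actual proof, minus the characteristic-function step in $x$, which the paper uses as a convenience rather than a necessity.
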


\begin{proof} We follow the same arguments as those used in \cite{Leon}.
    Taking the characteristic function of $p_x^f$ and the Laplace transform w.r.t. $t$ (which we denote as  $\widehat{u}$  and  $\tilde{u}$ correspondingly), we obtain:
    \begin{eqnarray*}
        \tilde{\hat{p}}_y^f(r,v)&=&\int_{0}^{\infty}\hat{p}_y(u,v)\tilde{l}^f(r,u)du=\\
        &=&\int_{0}^{\infty} \exp\left\{\Lambda\left(v,u+v\right)\left(e^{iy}-1\right)\right\}\frac{f(r)}{r}e^{-uf(r)}du\\
        &=&\frac{f(r)}{r}\left[\exp\left\{\Lambda\left(v,u+v\right)\left(e^{iy}-1\right)\right\}\left.\left[-\frac{1}{f(r)}e^{-uf(r)}\right]\right|_{u=0}^{\infty}\right.\\
        &&\left.+\frac{1}{f(r)}\int_{0}^{\infty}\left(\frac{d}{du}\Lambda(v,u+v)\right)\left(e^{iy}-1\right)\exp\left\{\Lambda\left(v,u+v\right)\left(e^{iy}-1\right)\right\}e^{-uf(r)}du\right]\\
        &=&\frac{1}{f(r)}\left[\frac{f(r)}{r}+\left(e^{iy}-1\right)\int_{0}^{\infty}\lambda(u+v)\exp\left\{\Lambda\left(v,u+v\right)\left(e^{iy}-1\right)\right\}\frac{f(r)}{r}e^{-uf(r)}du\right];
    \end{eqnarray*}
    and, therefore,
    $$
    f(r)\tilde{\hat{p}}_y^f(r,v)-\frac{f(r)}{r}=\left(e^{iy}-1\right)\int_{0}^{\infty}\lambda(u+v)\exp\left\{\Lambda\left(v,u+v\right)\left(e^{iy}-1\right)\right\}\frac{f(r)}{r}e^{-uf(r)}du.
    $$
    The left-hand side is equal to the following:
    \begin{equation*}
    f(r)\mathcal{L}_t\left\{\hat{p}_y^f(t,v)\right\}(r)-\frac{f(r)}{r}\hat{p}_y^f(0,v)=\mathcal{L}_t\left\{^f\mathcal{D}_t\hat{p}_y^f(t,v)\right\}(r),
    \end{equation*}
    in view of \eqref{LfDt} and taking into account that under conditions of the  theorem $\hat{p}_y^f(0,v)=1$.
    Therefore, inverting the Laplace transform, we have:
    \begin{equation*}
    ^f\mathcal{D}_t\hat{p}_y^f(t,v)=\left(e^{iy}-1\right)\int_{0}^{\infty}\lambda(u+v)\hat{p}_y(u,v)l^f(t,u)du,
    \end{equation*}
    and then,  inverting the characteristic function, we obtain:
    \begin{equation*}
    ^f\mathcal{D}_t p_x^f(t,v)=\int_{0}^{\infty}\lambda(u+v)\left[-p_x(u,v)+p_{x-1}(u,v)\right]l^f(t,u)du.
    \end{equation*}

    Note that equivalently the proof can be done by considering the double Laplace transform of $p_x^f(t,v)$ with respect to $x$ and $t$.
\end{proof}

From Theorem 1, taking $v=0$, we immediately obtain the corresponding results for the marginal distributions of the process $N\left(Y^f(t)\right)$ itself, which we formulate in the next two theorems.

\begin{theorem}
    Let $N\left(Y^f(t)\right)$, $t\geq0$, be a nonhomogeneous Poisson process time-changed by the inverse subordinator $Y^f(t)$, and Condition I hold. Then the marginal distributions $p_x^f(t)=\Prob \left\{N\left(Y^f(t)\right)=x\right\}, x=0,1,\dots,$ satisfy the differential-integral equations

    \begin{equation}\label{fDp}
    ^f\mathcal{D}_tp_x^f(t)=\int_{0}^{\infty}\lambda(u)\left[-p_x^f(u)+p_{x-1}^f(u)\right]l^f(t,u)du,
    \end{equation}
    with initial condition
    \begin{equation*}
    p_x(0)=
    \begin{cases}
    1,\quad x=0,\\
    0,\quad x\geq1,\\
    \end{cases}
    \end{equation*}
    $p_{-1}^f(0)=0$, where $^f\mathcal{D}_t$ is the generalized C-D derivative with respect to $f$ defined by \eqref{fDt}, and $l^f(t,u)$ is the density of the inverse subordinator $Y^f(t)$.
\end{theorem}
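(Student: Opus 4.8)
The plan is to obtain Theorem~2 as the direct specialization of Theorem~1 to the value $v=0$. The only thing to notice first is that the increment process collapses in this case: since $N(0)=0$, we have $I^f(t,0)=N\left(Y^f(t)+0\right)-N(0)=N\left(Y^f(t)\right)$, so that $p_x^f(t,0)=\Prob\left\{N\left(Y^f(t)\right)=x\right\}=p_x^f(t)$ are exactly the marginal distributions appearing in the present statement. Thus no new derivation is required beyond substituting $v=0$ into the equation of Theorem~1 and simplifying the objects on the right-hand side.

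Concretely, I would record the three identifications that occur at $v=0$. The cumulative intensity reduces to $\Lambda(0,u)=\Lambda(u)$, so the ordinary Poisson masses become $p_x(u,0)=e^{-\Lambda(u)}\Lambda(u)^x/x!=p_x(u)$; the shifted intensity reduces to $\lambda(u+0)=\lambda(u)$; and the inverse-subordinator density $l^f(t,u)$ is untouched by the choice of $v$. Feeding these into the conclusion of Theorem~1 gives
\[
{}^f\mathcal{D}_t\,p_x^f(t)=\int_{0}^{\infty}\lambda(u)\left[-p_x(u)+p_{x-1}(u)\right]l^f(t,u)\,du,
\]
with ${}^f\mathcal{D}_t$ the C--D convolution-type derivative \eqref{fDt}. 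The stated initial condition is inherited from the $v=0$ case of the initial condition in Theorem~1, equivalently from $\hat p_y^f(0,v)=1$, which forces $p_0^f(0)=1$, $p_x^f(0)=0$ for $x\ge 1$, and $p_{-1}^f(0)\equiv 0$.

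Since this is a corollary, I do not expect a genuine obstacle; the only point deserving a line of comment is the relation between the right-hand side here and the homogeneous fractional equation \eqref{intr1}. When $\lambda(u)\equiv\lambda$ is constant it may be pulled out of the integral, and because $\int_{0}^{\infty}p_x(u)\,l^f(t,u)\,du=p_x^f(t)$ the right-hand side collapses to $\lambda\left[-p_x^f(t)+p_{x-1}^f(t)\right]$, recovering \eqref{intr1}; for a genuinely non-homogeneous $\lambda(u)$ the factor $\lambda(u)$ stays inside the integral and the masses there remain the ordinary Poisson masses $p_x(u)=p_x(u,0)$, so no such simplification occurs. If one preferred a self-contained argument that does not cite Theorem~1, the identical Laplace-in-$t$ and characteristic-function computation of that proof can be run with $v=0$ from the outset: the transform identity \eqref{LfDt} converts $f(r)\tilde{\hat p}_y^f(r)-f(r)/r$ into $\mathcal{L}_t\big[{}^f\mathcal{D}_t\hat p_y^f(t)\big](r)$, and inverting both transforms yields the displayed equation directly.
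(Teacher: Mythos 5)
Your proposal is correct and coincides with the paper's own treatment: the paper gives no separate proof of this theorem, obtaining it exactly as you do by setting $v=0$ in Theorem 1, so that $I^f(t,0)=N\left(Y^f(t)\right)$, $\Lambda(0,u)=\Lambda(u)$ and $\lambda(u+0)=\lambda(u)$. Note also that the right-hand side you derive correctly contains the non-time-changed Poisson masses $p_x(u)$, $p_{x-1}(u)$, as Theorem 1 and the Laplace-transform computation dictate; the superscripts $f$ on these quantities inside the integral in the paper's displayed equation \eqref{fDp} are evidently a misprint (the same slip recurs in Example 1), so the discrepancy between your conclusion and the literal statement is a flaw of the statement, not of your argument.
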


\begin{theorem}
    Under the conditions of Theorem 2, suppose that the Poisson process $N$ is homogeneous  with intensity $\lambda$. Then the marginal distributions $p_x^f(t)=\Prob \left\{N\left(Y^f(t)\right)=x\right\}, x=0,1,\dots,$ satisfy the differential equations:
    \begin{equation}\label{fDpx}
    ^f\mathcal{D}_tp_x^f(t)=-\lambda\left[p_x^f(t)-p_{x-1}^f(t)\right],
    \end{equation}
    with the same initial conditions as in Theorem 2.
\end{theorem}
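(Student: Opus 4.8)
The plan is to specialize the governing equation of Theorem 2 to the homogeneous case by setting the intensity function $\lambda(u)\equiv\lambda$ constant in equation \eqref{fDp} (equivalently, in the $v=0$ instance of Theorem 1). Since $\lambda$ then no longer depends on the integration variable $u$, I would factor it out of the integral to obtain
\begin{equation*}
^f\mathcal{D}_tp_x^f(t)=\lambda\int_{0}^{\infty}\bigl[-p_x(u)+p_{x-1}(u)\bigr]l^f(t,u)\,du,
\end{equation*}
where $p_x(u)=e^{-\lambda u}(\lambda u)^x/x!$ are the marginal distributions of the homogeneous Poisson process and $l^f(t,u)$ is the density of $Y^f(t)$.

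The key step is then to split this integral into its two summands and recognize each one via the definition of the time-changed marginal distributions recalled just before Theorem 1, namely $p_x^f(t)=\int_{0}^{\infty}p_x(u)l^f(t,u)\,du$. This identifies $\int_{0}^{\infty}p_x(u)l^f(t,u)\,du=p_x^f(t)$ and $\int_{0}^{\infty}p_{x-1}(u)l^f(t,u)\,du=p_{x-1}^f(t)$, so that the right-hand side collapses to $-\lambda\bigl[p_x^f(t)-p_{x-1}^f(t)\bigr]$, which is precisely \eqref{fDpx}. The initial conditions carry over unchanged from Theorem 2.

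I expect no substantive obstacle here: the whole transition from the general to the homogeneous statement rests on the elementary fact that a constant intensity can be pulled outside the integral, after which the integrals reassemble into the time-changed probabilities by their very definition. The only point demanding a little care is the bookkeeping of the shifted index $x-1$ together with the convention $p_{-1}^f(0)=0$, which guarantees that the $x=0$ case reduces correctly to $^f\mathcal{D}_tp_0^f(t)=-\lambda\,p_0^f(t)$.
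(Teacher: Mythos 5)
Your proof is correct, but it is not the proof the paper writes out: it is exactly the ``direct corollary'' route that the authors acknowledge in one sentence (``Although Theorem 3 is a direct corollary of Theorem 1, we present now its another proof\dots'') and then deliberately bypass. One caution on sourcing: as printed, equation \eqref{fDp} in Theorem 2 has $p_x^f(u)$ and $p_{x-1}^f(u)$ under the integral, which is evidently a typo; your collapse of the integrals into $p_x^f(t)$ and $p_{x-1}^f(t)$ works only with the plain Poisson probabilities $p_x(u)$, $p_{x-1}(u)$ inside, i.e.\ with the $v=0$ instance of Theorem 1, which is how you (correctly) read it. The paper's own proof is genuinely different: it never invokes Theorem 1, but instead applies the R--L convolution-type derivative $^f\mathbb{D}_t$ of \eqref{fDDt} under the integral $\int_0^\infty p_x(u)l^f(t,u)\,du$, uses the governing equation \eqref{dlf} for the density $l^f(t,u)$, integrates by parts with the boundary data \eqref{dlfin}, inserts the Kolmogorov forward equation $\frac{d}{du}p_x(u)=-\lambda\left[p_x(u)-p_{x-1}(u)\right]$, and finally converts $^f\mathbb{D}_t$ into $^f\mathcal{D}_t$ via the relation \eqref{fDtrelation} together with $p_x^f(0)=p_x(0)$. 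What each approach buys: yours is shorter and inherits all the analytic work already done in the Laplace-transform proof of Theorem 1; the paper's is self-contained (modulo Toaldo's results on $l^f$) and, crucially, is the template reused verbatim for the time-changed Skellam process in Theorem 5, where the state space is $k\in\Z$ and the forward equation involves both $k-1$ and $k+1$ --- a setting where your specialization argument has nothing to specialize, since no Skellam analogue of Theorem 1 is available. Your handling of the initial conditions and of $p_{-1}$ is fine, since $p_{-1}(u)\equiv 0$ forces $p_{-1}^f(t)\equiv 0$.
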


Although Theorem 3 is a direct corollary of Theorem 1, we present now its another proof, which will be instructive for our consideration of time-changed Skellam processes in the next section.

\begin{proof}[Proof of Theorem 3] In the homogeneous case, for the probabilities $p_x^f(t)$ we have:
    \begin{equation*}
    p_x^f(t)=\Prob\left\{N\left(Y^f(t)\right)=x\right\}
    =\int_{0}^{\infty}p_x(u)l^f(t,u)du
    =\int_{0}^{\infty}\frac{e^{-\lambda u}(\lambda u)^x}{x!}l^f(t,u)du, \quad x=0,1,2,\dots
    \end{equation*}
    We take the generalized R-L convolution-type derivative $^f\mathbb{D}_t$ given by \eqref{fDDt}  and use the fact that the density $l^f(t,u)$ of the inverse subordinator satisfies the following equation (see \cite{T}):
    \begin{equation}
    ^f{\mathbb{D}}_t l^f(t,u)=-\frac{\partial}{\partial  u} l^f(t,u),
    \end{equation}
    and
    \begin{equation}
    l^f(t,0)=\nu(t), \,\,\,\, l^f(0,u)=\delta(u).
    \end{equation}

    We obtain:
    \begin{eqnarray}
    ^f{\mathbb{D}}_t p_x^f(t)
    &=&\int_{0}^{\infty}p_x(u)^f{\mathbb{D}}_tl^f(t,u)du=- \int_{0}^{\infty}p_x(u)\frac{\partial}{\partial  u}l^f(t,u)du\nonumber\\
    &=&\int_{0}^{\infty}l^f(t,u)\frac{\partial}{\partial  u}p_x(u)du-p_x(u)l^f(t,u)\big|_{u=0}^\infty\nonumber\\
    &=&\int_{0}^{\infty}l^f(t,u)(-\lambda[p_x(u)-p_{x-1}(u)])du+p_x(0)l^f(t,0)\nonumber\\
    &=&-\lambda\left[p^f_x(u)-p^f_{x-1}(u)\right]du+p_x(0)\nu(t). \label{11}
    \end{eqnarray}
    Using the relation \eqref{fDtrelation} between the convolution derivatives of C-D and R-L types, we have:
    \begin{equation}\label{12}
    ^f\mathcal{D}_t p_x^f(t) = \, ^f\mathbb{D}_t p_x^f(t)-\nu(t)p_x^f(0),
    \end{equation}
    and we note that
    \begin{equation}\label{13}
    p_x^f(0)=\int_{0}^{\infty}p_x(u)l^f(0,u)du=\int_{0}^{\infty}p_x(u)\delta(u)du=p_x(0)=1.
    \end{equation}
    From \eqref{11}, taking into account \eqref{12}-\eqref{13}, we finally obtain:
    \begin{equation*}
    ^f\mathcal{D}_tp_x^f(t)=-\lambda\left[p_x^f(t)-p_{x-1}^f(t)\right].
    \end{equation*}
\end{proof}

\begin{exam}{ In the case when $Y^f(t)=Y_{\alpha}(t), t\geq0$, is the inverse stable subordinator, that is, $f(x)=x^{\alpha},\alpha\in (0,1)$, we obtain  the known equation involving the fractional C-D derivative  $D_t^{\alpha}$  in the left-hand side:}
    \begin{enumerate}
        \item[(i)]
        if $N(t)$ is a nonhomegeneons Poisson process with intensity $\lambda$, then
        $N\left(Y_{\alpha}(t)\right)$ is a fractional nonhomogeneous Poisson process, and the governing equation \eqref{fDp} becomes
        \begin{equation*}
        \mathcal{D}_t^{\alpha}p_x^{\alpha}(t)=\int_{0}^{\infty}\lambda(u)\left[-p_x^{\alpha}(u)+p_{x-1}^{\alpha}(u)\right]h^{\alpha}(t,u)du,
        \end{equation*}
        where $h_{\alpha}(t,u)$ is the density of the inverse stable subordinator (this result was stated in \cite{Leon});

        \item[{(ii)}]
        if $N(t)$ is a homogeneous Poisson process with intensity $\lambda$, then we obtain the well known equation for probabilities of time-fractional Poisson process $N\left(Y_{\alpha}(t)\right)$:
        \begin{equation*}
        \mathcal{D}_t^{\alpha}p_x^{\alpha}(t)=-\lambda\left(p_x^{\alpha}(t)-p_{x-1}^{\alpha}(t)\right),\quad x=0,1,2,\dots,
        \end{equation*}
        (see, for example, \cite{BO09}, \cite{BO10}).
    \end{enumerate}
\end{exam}
\begin{remark}
    Moments and covariance of the nonhomogeneous
time-changed process $N\left(Y^f(t)\right)$ can be obtained with
the same arguments as in \cite{Leon}, Section 4. In fact, the
reasoning therein holds true if      instead
    of the inverse stable subordinator $Y_{\alpha}(t)$ we take a general inverse subordinator $Y^f(t)$. We come to the following formulas:
    \begin{equation*}
    \E \left[N\left(Y^f(t)\right)^k\right]=\int_{0}^{\infty}\sum_{i=1}^{k} \Lambda(x)^i  S(k,i)
    l^f(t,x)dx= \E\sum_{i=1}^{k} \Lambda\left(Y^f(t)\right)^iS(k,i)
    \end{equation*}
    where $ S(k,i)$ are the Stirling numbers of the second kind:
    $$S(k,i) =\frac{1}{i!}\sum_{j=0}^{i}(-1)^{i-j}\binom{i}{j}j^k;$$
    \begin{equation*}
    \E \left[N\left(Y^f(t)\right)\right]=\E\left[\Lambda\left(Y^f(t)\right)\right];
    \end{equation*}
    \begin{equation*}
    \E\left[\left[N\left(Y^f(t)\right)\right]^2\right]=\E\left[\Lambda\left(Y^f(t)\right)\right]+\E\left[\Lambda\left(Y^f(t)\right)^2\right].
    \end{equation*}

    \begin{equation*}
    Var\left[N\left(Y^f(t)\right)\right]=\E\left[\Lambda\left(Y^f(t)\right)\right]+Var\left[\Lambda\left(Y^f(t)\right)\right].
    \end{equation*}
    The derivation of the formula for the covariance function given in \cite{Leon} for the case of $Y_{\alpha}(t)$ is preserved completely for the general case and therefore, we come to the following:
    \begin{equation*}
    cov\left[N\left(Y^f(s)\right),N\left(Y^f(t)\right)\right]=\E\left[\Lambda\left(0,Y^f\left(s\wedge t\right)\right)\right]+cov\left[\Lambda\left(Y^f(s)\right),\Lambda\left(Y^f(t)\right)\right].
    \end{equation*}
    For more details we refer to \cite{Leon}.
\end{remark}

\begin{exam}
    The Poisson process time-changed by the inverse tempered stable subordinator.

    Consider the tempered stable subordinator $H^f(t)$, with the  Bern\v{s}tein function
    \begin{equation}\label{exf}
    f(x)=\left(x+\beta\right)^{\alpha}-\beta^{\alpha}, \quad \alpha\in (0,1), \beta>0.
    \end{equation}
    The corresponding L\'evy measure is given by the formula:
    \begin{equation*}
    \overline{\nu}(ds)=\frac{1}{\Gamma(1-\alpha)}\alpha e^{-\beta s}s^{-\alpha-1}ds;
    \end{equation*}
    and its tail is
    \begin{equation*}
    \nu(s)=\frac{1}{\Gamma(1-\alpha)}\alpha \beta^{\alpha}\Gamma(-\alpha,s),
    \end{equation*}
    where $\Gamma(-\alpha,s)=\int_{s}^{\infty}e^{-z}z^{-\alpha-1}dz$ is the incomplete Camma function.

    The generalized C-D convolution-type derivative \eqref{fDt} for $f(x)$, given by \eqref{exf}, becomes:
    \begin{equation}\label{exfDt}
    ^f{\mathcal D}_t u(t)= \frac{\alpha \beta^{\alpha}}{\Gamma(1-\alpha)}\int_{0}^{t}\frac{\partial }{\partial t }u(t-s)\Gamma(-\alpha,s)ds.
    \end{equation}

    If $N\left(Y^f(t)\right)$ is the Poisson process time-changed by the inverse tempered stable subordinator, that is, $f(x)$ is given by \eqref{exf}, then as consequences of Theorems 2 and 3 we obtain the following governing equations for the  probabilities $p_x^f(t)=\Prob\left[N\left(Y^f(t)\right)=x\right]$:

    \begin{enumerate}
        \item[(i)]
        if $N$ is a non-homogeneous process, then:
        \begin{equation*}
        ^f\mathcal{D}_tp_x^f(t)=\int_{0}^{\infty}\lambda(u)\left[-p_x^f(u)+p_{x-1}^f(u)\right]h^f(t,u)du;
        \end{equation*}

        \item[(ii)]
        if $N$ is a homogeneous process, then:
        \begin{equation*}
        ^f\mathcal{D}_tp_x^f(t)=-\lambda\left[p_x^f(t)-p_{x-1}^f(t)\right];
        \end{equation*}
    \end{enumerate}
    with the same initial conditions as in Theorem 2, where the derivative in the l.h.s. is defined by \eqref{exfDt}, and $h^f(t,u)$ is now the density of the inverse tempered stable subordinator, the exact formula for which is presented, for example, in \cite{AKMS}.

\end{exam}

As an interesting corollary of Theorem 3, we find an eigenfunction of the
convolution derivative of C-D type $^f\mathcal{D}_t$.
\begin{lemma}
    Let Condition I hold. The Laplace transform of the density $l^f(t,u)$ of the inverse subordinator
    \begin{equation*}
    \tilde{l}^f(t,\lambda)=\int_{0}^{\infty}e^{-\lambda u} l^f(t,u)du=\E e^{-\lambda Y^f(t)}
    \end{equation*}
    is an eigenfunction of the generalized C-D convolution-type derivative $^f\mathcal{D}_t$, that is
    \begin{equation}^f\mathcal{D}_t\tilde{l}^f(t,\lambda)=-\lambda \tilde{l}^f(t,\lambda).\label{lapl}\end{equation}
\end{lemma}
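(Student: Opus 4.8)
The plan is to recognize that the Laplace transform $\tilde{l}^f(t,\lambda) = \E e^{-\lambda Y^f(t)}$ is, up to interpretation, exactly the generating-function object whose equation was already derived in Theorem 3. The cleanest route is to connect $\tilde{l}^f(t,\lambda)$ to the probabilities $p_x^f(t)$ of a \emph{homogeneous} Poisson process time-changed by $Y^f(t)$. Indeed, for a homogeneous Poisson process with intensity $\lambda$ we have $p_x^f(t) = \int_0^\infty \frac{e^{-\lambda u}(\lambda u)^x}{x!}\, l^f(t,u)\,du$, and forming the probability generating function $\sum_{x=0}^\infty z^x p_x^f(t)$ collapses the Poisson sum to $\int_0^\infty e^{-\lambda u(1-z)} l^f(t,u)\,du = \tilde{l}^f(t,\lambda(1-z))$. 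Setting $z=0$ gives precisely $\tilde{l}^f(t,\lambda) = p_0^f(t)$, the probability of zero events.

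First I would apply Theorem 3 (equation \eqref{fDpx}) with $x=0$. Since $p_{-1}^f(t)\equiv 0$, that equation reads $^f\mathcal{D}_t\, p_0^f(t) = -\lambda\, p_0^f(t)$. Substituting $p_0^f(t) = \tilde{l}^f(t,\lambda)$ yields \eqref{lapl} immediately. This is the conceptually shortest proof, but it ties the eigenvalue to the specific intensity appearing in the Poisson construction, so I would want to present it in a way that makes the role of $\lambda$ as a free spectral parameter transparent.

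A more self-contained alternative, which I would likely prefer for directness, is to verify \eqref{lapl} by hand via the Laplace transform in $t$, mirroring the computation in the proof of Theorem 1. Using $\mathcal{L}_t(l^f(t,u))(r) = \frac{f(r)}{r} e^{-u f(r)}$, I would compute
\begin{equation*}
\mathcal{L}_t\bigl[\tilde{l}^f(t,\lambda)\bigr](r) = \int_0^\infty e^{-\lambda u}\,\frac{f(r)}{r}\,e^{-u f(r)}\,du = \frac{f(r)}{r}\cdot\frac{1}{\lambda + f(r)}.
\end{equation*}
Then by the Laplace-transform rule \eqref{LfDt} together with the initial value $\tilde{l}^f(0,\lambda) = \E e^{-\lambda Y^f(0)} = 1$ (since $Y^f(0)=0$), I would obtain
\begin{equation*}
\mathcal{L}_t\bigl[{}^f\mathcal{D}_t\,\tilde{l}^f(t,\lambda)\bigr](r) = f(r)\cdot\frac{f(r)}{r(\lambda+f(r))} - \frac{f(r)}{r} = \frac{f(r)}{r}\cdot\frac{-\lambda}{\lambda+f(r)} = -\lambda\,\mathcal{L}_t\bigl[\tilde{l}^f(t,\lambda)\bigr](r),
\end{equation*}
and inverting the Laplace transform gives \eqref{lapl}.

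The main obstacle is not the algebra, which is routine, but justifying the application of \eqref{LfDt}: that rule requires the target function to satisfy an exponential growth bound $|u(t)| \le \mathsf{M} e^{s_0 t}$ and to lie in the domain of $^f\mathcal{D}_t$ (absolute continuity). Here $\tilde{l}^f(t,\lambda)$ is a Laplace transform of a probability density, hence bounded in $[0,1]$, so the growth bound holds trivially with $s_0 = 0$; I would note this explicitly. I would also need to confirm that differentiation under the integral sign (or equivalently, that $^f\mathcal{D}_t$ commutes with the $u$-integration defining $\tilde{l}^f$) is legitimate under Condition I, which guarantees the density exists and has the stated transform. Once these regularity points are dispatched, the statement follows.
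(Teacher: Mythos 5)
Your first route is exactly the paper's proof: the authors take a homogeneous Poisson process $N_{\lambda}$ with intensity $\lambda$, apply Theorem 3 (equation \eqref{fDpx}) at $x=0$ with $p_{-1}^f\equiv 0$, and identify $p_0^f(t)=\int_0^\infty e^{-\lambda u}l^f(t,u)\,du=\tilde{l}^f(t,\lambda)$, so that part needs no further comment. Your preferred second route --- computing $\mathcal{L}_t\bigl[\tilde{l}^f(t,\lambda)\bigr](r)=\frac{f(r)}{r(\lambda+f(r))}$ from the known transform of $l^f(t,u)$, applying the rule \eqref{LfDt} with $\tilde{l}^f(0,\lambda)=1$, and inverting --- is a genuinely different, self-contained verification; the algebra is correct, it is the same Laplace-transform mechanism the paper uses to prove Theorem 1, and the transform $\frac{f(r)}{r(\lambda+f(r))}$ you obtain is precisely the identity \eqref{2} quoted in Remark 3 from Kochubei's work. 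What the paper's route buys is brevity and interpretation: given Theorem 3 the lemma is a one-line observation, and the probabilistic identification of $\tilde{l}^f(t,\lambda)$ as a void probability explains why the eigenfunction property should hold. What your direct route buys is independence from the Poisson construction: $\lambda$ enters only as a spectral parameter rather than as an intensity, and the lemma becomes available without first proving Theorems 1--3, which removes the slight awkwardness you correctly noted about tying the eigenvalue to a particular process. Your flagged regularity points (boundedness of $\tilde{l}^f$ by $1$, so the growth condition in \eqref{LfDt} holds with $s_0=0$, and membership of $t\mapsto\tilde{l}^f(t,\lambda)$ in the domain of ${}^f\mathcal{D}_t$) are the right ones to raise; the paper relies on the same facts implicitly through its chain Theorem 1 $\Rightarrow$ Theorem 3 $\Rightarrow$ Lemma 1, so your treatment is at least as careful as the original.
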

\begin{proof}
    The result follows immediately from Theorem 3. Indeed, taking $x=0$ in \eqref{fDpx}, we have:
    $$^f\mathcal{D}_tp_0^f(t)=-\lambda p_0^f(t),$$
    where
    \begin{equation*}
    p_0^f(t)=p_0^f(t,\lambda)=\Prob\left\{N_{\lambda}\left(Y^f(t)\right)=0\right\}=\int_{0}^{\infty}p_0(u) l^f(t,u)du=\int_{0}^{\infty}e^{-\lambda u} l^f(t,u)du=\tilde{l}^f(t,\lambda).
    \end{equation*}
\end{proof}

\begin{remark}
    From Lemma 1 in the case when $Y^f(t)$ is the inverse stable subordinator $Y_{\alpha}(t)$ with $f(s)=s^{\alpha}$, we reveal the well known result that the Laplace transform of the inverse subordinator $Y_{\alpha}(t)$
    $$\E e^{-\lambda Y_{\alpha}(t)}=\mathcal{E}_{\alpha}\left(-\lambda t^{\alpha}\right),$$
    where $\mathcal{E}_{\alpha}\left(\cdotp\right)$ is the Mittag-Leffler function, is an eigenfunction of the C-D fractional derivative:
    $$\frac{\partial^{\alpha}}{\partial t^{\alpha}}\mathcal{E}_{\alpha}\left(-\lambda t^{\alpha}\right)=-\lambda \mathcal{E}_{\alpha}\left(-\lambda t^{\alpha}\right).$$
\end{remark}

\begin{remark}
    The result,
  which we present in Lemma 1 as a direct consequence of Theorem 3,
  has been derived in \cite{K} by means of analytic methods based on the the theory
  of complete Bern\v{s}tein functions.
  It is stated in Theorem 2 \cite{K} that the solution $u(t)$ of the Cauchy problem
    \begin{equation}\label{0}
        \left(\mathbb{D}_{(k)} u\right)(t)=-\lambda u(t), \quad\lambda>0,\,\, t>0,\,\, u(0)=1,
    \end{equation}
    is continuous on $[0,\infty),$ infinitely differentiable and completely monotone,
    under the prescribed conditions on the function $k$,
    which is used to define the differential-convolution operator $\mathbb{D}_{(k)}$:
    \begin{equation}
        \left(\mathbb{D}_{(k)} u\right)(t)=\frac{d}{dt}\int_{0}^{t}k(t-\tau) u(\tau)d\tau-k(t)u(0).
    \end{equation}
    Moreover,
     the probabilistic interpretation of the solution has been given.
     We present here some details to shed more light on properties of the process $N\left(Y^f(t)\right)$.

    Let $H^f$ be a subordinator with the Laplace exponent
    \begin{equation}
        f(x)= bx+\int_{0}^{\infty}\left(1-e^{-xs}\right)\overline{\nu}(ds), b\geq 0,
    \end{equation}
    and either $b>0$, or $\overline{\nu}((0,\infty))=\infty$, or both, and let $Y^f(t)$ be the inverse process defined by \eqref{Yf}.

    The time-changed Poisson process $N(Y^f(t))$ is a renewal process with the waiting  time $J_n$ such that

    \begin{equation}\label{1}
        P\left\{J_n>t\right\}=\E e^{-\lambda Y^f(t)}=\tilde{l}^f(t,\lambda)
    \end{equation}
    and
    \begin{equation}\label{2}
        \int_{0}^{\infty}e^{-st}\E e^{-\lambda  Y^f(t)}dt=\frac{f(s)}{s(\lambda+f(s))}.
    \end{equation}
    (see \cite{MS}, \cite{MNV}, \cite{K}, \cite{T}).

    As can be seen from Theorem 2 from \cite{K}, the expression in the r.h.s. of \eqref{2}
    appears to be the Laplace transform of the solution $u(t)$ to  equation \eqref{0}, which entails
    that the solution itself is given by $\E e^{-\lambda  Y^f(t)}$.

    Therefore, we can see that the result stated in Lemma 1
    is in accordance with the results on the solution to the Cauchy problem \eqref{0} given in \cite{K}.
\end{remark}

\begin{remark}
    The study of arrival times of the time-changed process $N(Y^f(t)$,
    can be done in the same way as that in \cite{Leon} for the case
    of the Poisson process with inverse stable subordinator $N(Y_{\alpha}(t))$.

    We present here some details following to \cite{Leon}.

    The distribution functions for arrival times $T_n$ of homogeneous and
    non-homogeneous Poisson processes are given by
    \begin{equation}\label{1*}
        \mathcal{F}_{T_n}^{HP}(t)=1-e^{-\lambda t}\sum_{k=0}^{n-1}\frac{(\lambda t)^k}{k!}
    \end{equation}
    and
    \begin{equation}\label{2*}
    \mathcal{F}_{T_n}^{NHP}(t)=1-e^{-\Lambda(t)}\sum_{k=0}^{n-1}\frac{(\Lambda(t))^k}{k!}
    \end{equation}
    correspondingly, where for \eqref{2*} to be a distribution function it is required
    for the
     function $\Lambda(t)$ to be a monotone increasing and
    $\Lambda(t) \to 0$, as $t\to 0$,  $\Lambda(t)\to \infty$, as $t\to \infty$ (see, \cite{Leon}).

    The derivation of the expression for distribution of arrival
    times $T_n$ for the process $N(Y^f(t))$ can be done following the same lines of reasoning
    as in \cite{Leon} for the case of time-change by the inverse
    stable subordinator. We obtain the following:
    \begin{enumerate}
        \item[(i)]
        if $N(t)$ is a nonhomegeneons Poisson process, then
\begin{equation*}
        \mathcal{F}_{T_n}^f(t)=\int_{0}^{\infty}l^f(t,u)\mathcal{F}_{T_n}^{NHP}(u)du
    \end{equation*}
        \item[{(ii)}]
        if $N(t)$ is a homogeneous Poisson process, then
        \begin{equation*}
        \mathcal{F}_{T_n}^f(t)=\int_{0}^{\infty}l^f(t,u)\mathcal{F}_{T_n}^{HP}(u)du
        =1-\sum_{k=0}^{n-1} \frac{\lambda^k}{k!}(-1)^k \frac{\partial^k}{\partial\lambda^k}\E e^{-\lambda
        Y^f(t)},
    \end{equation*}
    that is, in the latter case the distribution function of $T_n$ is
    expressed in terms of the Laplace transform of the inverse subordinator $\E e^{-\lambda
        Y^f(t)}$ in the place of the Mittag-Leffler function which
        appears for the particular case of inverse stable subordinator.
    \end{enumerate}

\end{remark}

We are also able to derive the equation for the moment generating function of the homogeneous time-changed Poisson process $N\left(Y^f(t)\right)$.
\begin{lemma}
    The moment generating function of the process $N\left(Y^f(t)\right)$, under the conditions of Theorem 3, is given by the formula
    $$M\left(\theta,t\right)=\tilde{l}^f\left(t,\lambda\left(1- e^{\theta}\right)\right)$$
    and satisfies the differential equation
    \begin{equation}\label{*}
    ^f\mathcal{D}_t M\left(\theta,t\right)=\lambda\left(e^{\theta}-1\right)M\left(\theta,t\right)
    \end{equation}
    with the initial condition $M\left(\theta,0\right)=1$.
\end{lemma}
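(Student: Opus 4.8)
The plan is to first identify $M(\theta,t)$ with a Laplace transform of the density $l^f$ and then read off the governing equation from the eigenfunction property already recorded in Lemma 1 (equivalently, from Theorem 3). First I would write the moment generating function as $M(\theta,t)=\E e^{\theta N(Y^f(t))}=\sum_{x=0}^{\infty}e^{\theta x}p_x^f(t)$ and insert the mixture representation $p_x^f(t)=\int_{0}^{\infty}\frac{e^{-\lambda u}(\lambda u)^x}{x!}l^f(t,u)\,du$ valid in the homogeneous case. Interchanging summation and integration (justified by nonnegativity of the integrand via Tonelli) and summing the exponential series $\sum_{x}(e^{\theta}\lambda u)^x/x!=e^{e^{\theta}\lambda u}$, the surviving factor $e^{-\lambda u}$ combines to give $\int_{0}^{\infty}e^{-\lambda(1-e^{\theta})u}l^f(t,u)\,du=\tilde{l}^f(t,\lambda(1-e^{\theta}))$, which is exactly the asserted closed form.

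Next, to obtain equation \eqref{*}, I would apply $^f\mathcal{D}_t$ to the series for $M$ termwise and substitute \eqref{fDpx} from Theorem 3, namely $^f\mathcal{D}_t p_x^f(t)=-\lambda[p_x^f(t)-p_{x-1}^f(t)]$. Summing against $e^{\theta x}$ and using $p_{-1}^f\equiv 0$ together with the index shift $\sum_{x\ge 0}e^{\theta x}p_{x-1}^f(t)=e^{\theta}\sum_{y\ge 0}e^{\theta y}p_y^f(t)=e^{\theta}M(\theta,t)$, the right-hand side collapses to $-\lambda(1-e^{\theta})M(\theta,t)=\lambda(e^{\theta}-1)M(\theta,t)$, which is \eqref{*}. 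Alternatively, and more directly, since $M(\theta,t)=\tilde{l}^f(t,\mu)$ with $\mu=\lambda(1-e^{\theta})$, the eigenfunction relation \eqref{lapl} of Lemma 1 applied with Laplace parameter $\mu$ yields $^f\mathcal{D}_t M(\theta,t)=-\mu M(\theta,t)=\lambda(e^{\theta}-1)M(\theta,t)$ at once. The initial condition is immediate: by \eqref{13} we have $p_x^f(0)=p_x(0)$, so $M(\theta,0)=\sum_{x}e^{\theta x}p_x(0)=1$.

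The main obstacle I anticipate is justifying the two interchanges, namely summation with the $u$-integration in the first step and summation with the convolution-type operator $^f\mathcal{D}_t$ (which itself contains a time convolution and a $t$-derivative) in the second step, and, relatedly, controlling the admissible range of $\theta$. For $\theta>0$ the parameter $\mu=\lambda(1-e^{\theta})$ is negative, so $\tilde{l}^f(t,\mu)$ is a Laplace transform evaluated off the half-line where absolute convergence is automatic; I would therefore either restrict attention to $\theta\le 0$, where all dominated-convergence arguments are routine, or invoke analyticity in $\mu$ to extend \eqref{lapl} to the relevant range. Since the termwise route through Theorem 3 bypasses the convergence of the Laplace transform at a negative argument, I expect it to be the safer vehicle for making the interchanges rigorous.
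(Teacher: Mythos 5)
Your proposal is correct, and its ``more direct'' route is exactly the paper's proof: the paper computes $M(\theta,t)=\int_0^\infty \E e^{\theta N(u)}\,l^f(t,u)\,du=\int_0^\infty e^{-u\lambda(1-e^\theta)}l^f(t,u)\,du=\tilde{l}^f\bigl(t,\lambda(1-e^\theta)\bigr)$ by conditioning on $Y^f(t)$ (your series manipulation with Tonelli is the same computation written out termwise), and then simply says that \eqref{*} ``follows from Lemma 1.'' Where you go beyond the paper is in the second route and in the accompanying caveat: you observe that for $\theta>0$ the argument $\mu=\lambda(1-e^\theta)$ is negative, whereas Lemma 1 is stated and proved with $\lambda$ playing the role of a Poisson intensity, hence positive; the paper's one-line invocation of Lemma 1 silently assumes the eigenfunction identity \eqref{lapl} extends to this range. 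Your alternative derivation --- applying $^f\mathcal{D}_t$ termwise to $\sum_x e^{\theta x}p_x^f(t)$, substituting \eqref{fDpx}, and shifting the index using $p_{-1}^f\equiv 0$ --- keeps the intensity $\lambda>0$ throughout and so genuinely avoids evaluating Lemma 1 at a negative argument; what it costs is the need to justify interchanging the convolution-type operator with the infinite sum, and it does not by itself settle finiteness of $M(\theta,t)$ for $\theta>0$ (the sum equals the Laplace transform at the negative argument, so their convergence is the same question; under Condition I one has $f(s)\to\infty$, and a Chernoff-type bound on $\Prob\{Y^f(t)>u\}\le e^{st}e^{-uf(s)}$ gives finiteness for every real $\theta$). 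So: same closed-form computation, same key lemma available, but your termwise variant is a more careful treatment of a point the paper glosses over.
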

\begin{proof}
    We can write
    \begin{eqnarray*}
        \E e^{\theta N\left(Y^f(t)\right)}&=&\int_{0}^{\infty}\E e^{\theta N(u)}l^f(t,u)du=\int_{0}^{\infty} e^{-u \left(\lambda-\lambda e^{\theta}\right)}l^f(t,u)du\\
        &=&\tilde{l}^f\left(t,\lambda\left(1- e^{\theta}\right)\right).
    \end{eqnarray*}
    The formula \eqref{*} follows from Lemma 1.
\end{proof}

Reconsidering the proof of Lemma 2, we conclude that the more general result can be stated, namely, for the process $Z(t)=X(Y^f(t))$, where $X(t)$ is an arbitrary L\'{e}vy process.

Indeed, let $X(t)$ be a  L\'{e}vy process with the Laplace exponent $f_X(s)$.

Then we can write:
\begin{eqnarray}
M_Z\left(\theta,t\right)&=&\E e^{\theta Z\left(Y^f(t)\right)}=\int_{0}^{\infty}\E e^{\theta Z(u)}l^f(t,u)du=\int_{0}^{\infty} e^{-u f_X\left(-{\theta}\right)}l^f(t,u)du\nonumber\\
&=&\tilde{l}^f\left(t,f_X\left(-{\theta}\right)\right);
\end{eqnarray}
from Lemma 1 we know that $\tilde{l}^f(t,\lambda)$ is an eigenfunction of the derivative $^f\mathcal{D}_t $, and, therefore,
\begin{equation}
^f\mathcal{D}_t M_Z\left(\theta,t\right)=-f_X\left(-{\theta}\right)M_Z\left(\theta,t\right),
\end{equation}
provided that $f_X\left(-{\theta}\right)$ is well defined.

\section{Skellam processes time-changed by inverse subordinators}

Let $S(t)$ be a Skellam process:
\[
S(t)=N_{1}(t)-N_{2}(t),t\geq 0,
\]
where $N_{1}(t)$ and $N_{2}(t)$ are two independent homogeneous Poisson
processes with intensity parameters $\lambda _{1}>0$ and $\lambda _{2}>0$,
correspondingly.

The probability distribution of $S(t)$ is given by
\begin{equation}
s_{k}(t)=P\left( S(t)=k\right) =e^{-t\left( \lambda _{{1}}+\lambda
    _{2}\right) }\left( \frac{\lambda _{{1}}}{\lambda _{2}}\right)
^{k/2}I_{|k|}\left( 2t\sqrt{\lambda _{{1}}\lambda _{2}}\right) , k\in \Bbb{Z}=\{0,\pm 1,\pm 2,\ldots \},
\label{skellamdistr}
\end{equation}
where $I_{k}$ is the modified Bessel function of the first kind:
\begin{equation*}
I_{k}(z)=\sum_{n=0}^{\infty }\frac{\left( z/2\right) ^{2n+k}}{n!(n+k)!}.
\label{Bessel}
\end{equation*}
The Skellam process is a L\'{e}vy process, its L\'{e}vy measure is
the linear combination of two Dirac measures: $\nu (du)=\lambda
_{1}\delta _{\{1\}}(u)du+\lambda _{2}\delta _{\{-1\}}(du)$, and
the corresponding Bern\v{s}tein function is given by
\[
f_{S}(\theta )=\int_{-\infty }^{\infty }\left( 1-e^{-\theta y}\right) \nu
(dy)=\lambda _{1}\left( 1-e^{-\theta }\right) +\lambda _{2}\left(
1-e^{\theta }\right).
\]

Skellam processes are considered, for example, in \cite{B-N}, the Skellam
distribution had been introduced and studied in \cite{Sk}.

Consider the time-changed Skellam process
\begin{equation}\label{4.1}
Z(t)=S\left(Y^f(t)\right), \quad t\geq0,
\end{equation}
where $Y^f(t)$ is an inverse subordinator.
\begin{theorem}
    Let $Z(t)=S\left(Y^f(t)\right)$, $t\geq0$, be a time-changed Skellam process, where $Y^f(t)$ is the inverse subordinator,  and Condition I holds.

    Then:

    1. The marginal distribution of $Z(t)$ is given by
    $$r_k^f(t)=\Prob\left\{Z(t)=k\right\}=\int_{0}^{\infty}s_k(u) l^f(t,u)du$$
    and satisfies the following system of differential equations:
    \begin{equation}\label{4.2}
    ^f\mathcal{D}_t r_k^f(t)=\lambda_{1}\left(r_{k-1}^f(t)-r_{k}^f(t)\right)-\lambda_{2}\left(r_{k}^f(t)-r_{k+1}^f(t)\right)
    \end{equation}
    with the initial condition
    \begin{equation*}
    r_k(0)=
    \begin{cases}
    1,\quad k=0,\\
    0,\quad k\neq0.\\
    \end{cases}
    \end{equation*}

    2. The moment generating function is given by
    \begin{equation}\label{4.3}
    L\left(\theta,t\right)=\tilde{l}^f\left(t,\left(\lambda_{1}+\lambda_{2}-\lambda_{1}e^{\theta}-\lambda_{2}e^{-\theta}\right)\right),
    \end{equation}
    and satisfies the following differential equation:
    \begin{equation}\label{4.4}
    ^f\mathcal{D}_t L\left(\theta,t\right)=\left[\lambda_{1}\left(e^{\theta}-1\right)+\lambda_{2}\left(e^{-\theta}-1\right)\right]L\left(\theta,t\right)
    \end{equation}
    with the initial condition $L\left(\theta,0\right)=1.$
\end{theorem}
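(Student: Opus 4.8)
The plan is to handle the two parts separately: Part 1 follows verbatim the alternative proof of Theorem 3 (the one based on the R-L derivative and the density equation \eqref{dlf}), while Part 2 reduces immediately to Lemma 1 once the moment generating function is computed. The only genuinely new ingredient is the governing equation of the base Skellam process. Since $S(u)=N_1(u)-N_2(u)$ is a L\'evy process with upward jumps of size $1$ occurring at rate $\lambda_1$ and downward jumps of size $1$ at rate $\lambda_2$, its marginal probabilities obey the forward (master) equation
$$\frac{d}{du}s_k(u)=\lambda_1\bigl(s_{k-1}(u)-s_k(u)\bigr)-\lambda_2\bigl(s_k(u)-s_{k+1}(u)\bigr),\qquad k\in\Z,$$
with $s_k(0)=\delta_{k,0}$. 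I would first record this (it is the two-sided analogue of the Poisson master equation used in Theorem 3).

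For Part 1, I would apply the generalized R-L convolution-type derivative $^f\mathbb{D}_t$ to $r_k^f(t)=\int_0^\infty s_k(u)\,l^f(t,u)\,du$, move it under the integral, and invoke the density equation $^f\mathbb{D}_t l^f(t,u)=-\frac{\partial}{\partial u}l^f(t,u)$ from \eqref{dlf}. Integrating by parts in $u$ transfers the derivative onto $s_k$; the boundary term at $u=\infty$ vanishes because $s_k(u)\le1$ while $l^f(t,u)\to0$, and the term at $u=0$ equals $s_k(0)\,l^f(t,0)=\delta_{k,0}\,\nu(t)$ by \eqref{dlfin}. Substituting the Skellam master equation into the remaining integral then yields
$$^f\mathbb{D}_t r_k^f(t)=\lambda_1\bigl(r_{k-1}^f(t)-r_k^f(t)\bigr)-\lambda_2\bigl(r_k^f(t)-r_{k+1}^f(t)\bigr)+\delta_{k,0}\,\nu(t).$$
Finally I would pass from the R-L to the C-D derivative through \eqref{fDtrelation}, writing $^f\mathcal{D}_t r_k^f(t)={}^f\mathbb{D}_t r_k^f(t)-\nu(t)\,r_k^f(0)$, and note that $r_k^f(0)=\int_0^\infty s_k(u)\delta(u)\,du=s_k(0)=\delta_{k,0}$. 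The two $\delta_{k,0}\,\nu(t)$ contributions cancel exactly, leaving precisely the system \eqref{4.2} with the stated initial condition, exactly as the $\nu(t)$ terms cancelled in \eqref{11}--\eqref{13}.

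For Part 2 the derivation is shorter and avoids the boundary analysis altogether. By independence of $N_1$ and $N_2$ the moment generating function of the base process factorizes, $\E e^{\theta S(u)}=\E e^{\theta N_1(u)}\,\E e^{-\theta N_2(u)}=\exp\{u[\lambda_1(e^\theta-1)+\lambda_2(e^{-\theta}-1)]\}$. Conditioning on $Y^f(t)$ and integrating against $l^f(t,u)$ gives
$$L(\theta,t)=\int_0^\infty e^{-u(\lambda_1+\lambda_2-\lambda_1 e^\theta-\lambda_2 e^{-\theta})}\,l^f(t,u)\,du=\tilde{l}^f\bigl(t,\lambda_1+\lambda_2-\lambda_1 e^\theta-\lambda_2 e^{-\theta}\bigr),$$
which is \eqref{4.3}. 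Lemma 1 says that $\tilde{l}^f(t,\lambda)$ is an eigenfunction of $^f\mathcal{D}_t$ with eigenvalue $-\lambda$, so applying $^f\mathcal{D}_t$ with $\lambda=\lambda_1+\lambda_2-\lambda_1 e^\theta-\lambda_2 e^{-\theta}$ immediately produces \eqref{4.4}; the initial condition $L(\theta,0)=1$ follows from $\tilde{l}^f(0,\lambda)=\int_0^\infty e^{-\lambda u}\delta(u)\,du=1$ (equivalently, from $Z(0)=S(0)=0$).

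The step I expect to require the most care is the rigorous justification in Part 1 of interchanging the operator $^f\mathbb{D}_t$ (which contains a time-convolution against $\nu$ and a $d/dt$) with the $u$-integral, together with the integration by parts. The boundary term at $u=\infty$ is under control since $s_k(u)\le 1$ and $l^f(t,u)\to0$, and the exponential factor in \eqref{skellamdistr} guarantees integrability; nevertheless one should verify these dominated-convergence and Fubini hypotheses explicitly, as in \cite{Leon} and \cite{T}. Beyond this technical point the argument is a direct transcription of the Poisson case, the only conceptual difference being the two-sided index $k\in\Z$ and the presence of the additional downward-jump term carrying $\lambda_2$.
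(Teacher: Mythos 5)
Your proposal is correct and follows essentially the same route as the paper's own proof: Part 1 via the R--L derivative $^f\mathbb{D}_t$, the density equation \eqref{dlf}, integration by parts against the Skellam master equation, and the R--L to C--D conversion \eqref{fDtrelation} with cancellation of the $\nu(t)$ terms; Part 2 via conditioning and Lemma 1. The only differences are cosmetic refinements on your side (explicit treatment of the boundary term at $u=\infty$ and of the Fubini-type interchange), which the paper leaves implicit.
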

\begin{proof}
    The proof of the first part is analogous to that of Theorem 3.
    Using conditioning arguments, we obtain the formula for the marginal distribution of $Z(t)=S\left(Y^f(t)\right)$:
    $$r_k^f(t)=\int_{0}^{\infty}s_k(u) l^f(t,u)du,$$
    where $s_k(u)$ are given by \eqref{skellamdistr}.

    We take the derivative $^f\mathbb{D}_t$ and use formula \eqref{dlf}:
    \begin{equation}\label{4.5}
    ^f{\mathbb{D}}_t r_k^f(t)
    =- \int_{0}^{\infty}s_k(u)\frac{\partial}{\partial  u}l^f(t,u)du
    =\int_{0}^{\infty}l^f(t,u)\frac{\partial}{\partial  u}s_k(u)du-s_k(u)l^f(t,u)\big|_{0}^\infty;
    \end{equation}
    Since the probabilities $s_k(u)$ satisfy the equation
    \begin{equation*}
    \frac{\partial}{\partial  u}s_k(u)du=\lambda_{1}\left(s_{k-1}(t)-s_k(t)\right)-\lambda_{2}\left(s_{k}(t)-s_{k+1}(t)\right),
    \end{equation*}
    from \eqref{4.5} we obtain:
    \begin{equation}\label{4.6}
    ^f\mathbb{D}_t r_k^f(t)=\lambda_{1}\left(r_{k-1}^f(t)-r_{k}^f(t)\right)-\lambda_{2}\left(r_{k}^f(t)-r_{k+1}^f(t)\right)+s_k(0)l^f(t,0).
    \end{equation}
    Using relation \eqref{fDtrelation} between the derivatives $^f\mathcal{D}_t$ and $^f\mathbb{D}_t$, we can write
    \begin{equation}\label{4.7}
    ^f\mathcal{D}_t r_k^f(t)=^f\mathbb{D}_tr_k^f(t)-\nu (t)r_k^f(0),
    \end{equation}
    we also have (see \eqref{dlfin}):
    \begin{equation}\label{4.8}
    l^f(t,0)=\nu(t)
    \end{equation}
    and
    \begin{equation}\label{4.9}
    r_k^f(0)
    =- \int_{0}^{\infty}s_k(u)l^f(0,u)du
    =\int_{0}^{\infty}s_k(u)\delta(u)du=s_k(0).
    \end{equation}

    In view of \eqref{4.7}-\eqref{4.9}, from \eqref{4.6} we obtain equation \eqref{4.2}.

    For the moment generating function we can write:
    \begin{eqnarray*}
        \E e^{\theta Z(t)}&=&\E e^{\theta S\left(Y^f(t)\right)}=\int_{0}^{\infty}\E e^{\theta Z(u)}l^f(t,u)du=\int_{0}^{\infty} e^{-u \left(\lambda_{1}+\lambda_{2}-\lambda_{1}e^{\theta}-\lambda_{2}e^{-\theta}\right)}l^f(t,u)du\\
        &=&\tilde{l}^f\left(t,\left(\lambda_{1}+\lambda_{2}-\lambda_{1}e^{\theta}-\lambda_{2}e^{-\theta}\right)\right)
    \end{eqnarray*}
    and, therefore, formula \eqref{4.3} is obtained. The equation \eqref{4.4} follows in view of Lemma 1.
\end{proof}

\begin{remark}
    In the case when $Y^f(t)$ is an inverse stable subordinator, from Theorem 5 we obtain the known results for so-called fractional Skellam process $S\left(Y_{\alpha}(t)\right)$, stated in \cite{KLS}(Theorem 3.2), that is, equations \eqref{4.2} and \eqref{4.4} hold with the fractional C-D derivative in the l.h.s.
\end{remark}

\section{Concluding remarks}
In the paper the time-changed processes $N(Y^f(t)), \,\,t\ge 0,
\,\, \mbox{\rm and} \,\, S(Y^f(t)), \,\,t\ge 0, $ are studied,
where $N(t)$ is a (non-homogeneous) Poisson process, $S(t)$ is a
Skellam process, and $Y^f(t)$ is the inverse process for the
subordinator with Bern\v{s}tein function $f$. The governing
equations for marginal distributions of the processes $N(Y^f(t))$
and $S(Y^f(t))$ are presented in terms of the generalised
Caputo-Djrbashian convolution-type derivatives with respect to the
function $f$ defined in \cite{K}, \cite{T}. For the case where
$Y^f(t)$ is an inverse stable subordinator, the obtained results
coincide with the known results  for fractional (in time) Poisson
and Skellam processes.

\section*{Acknowledgments}
The authors are grateful to the referee for valuable comments and
suggestions which helped to improve the paper.


\end{document}